\documentclass[11pt,oneside]{amsart}

\usepackage{amssymb}
\usepackage{amsmath}
\usepackage{amsthm}
\usepackage[all]{xy}

\theoremstyle{plain}
\newtheorem{thm}{Theorem}[section]
\newtheorem{prop}[thm]{Proposition}
\newtheorem{lem}[thm]{Lemma}
\newtheorem{cor}[thm]{Corollary}

\newtheorem{clm}[thm]{Claim}

\theoremstyle{definition}
\newtheorem{defn}[thm]{Definition}

\theoremstyle{remark}
\newtheorem{rem}[thm]{Remark}

\DeclareMathOperator{\mult}{mult}

\DeclareMathOperator{\Proj}{Proj}
\DeclareMathOperator{\codim}{codim}

\DeclareMathOperator{\Cone}{Cone}
\DeclareMathOperator{\Hilb}{Hilb}
\DeclareMathOperator{\ord}{ord}
\DeclareMathOperator{\Hom}{Hom}

\def\N{\mathbb{N}}

\def\R{\mathbb{R}}
\def\C{\mathbb{C}}

\def\r+{\mathbb{R}_{\geq 0}}

\def\ep{\varepsilon}

\def\r+{{\R}_{\geq 0}}
\def\P{\mathbb{P}}
\def\arw{\rightarrow}
\def\*c{\C^{\times}}

\newcommand{\calo}{\mathcal {O}}

\newcommand{\m}{\mathfrak{m}}

\begin{document}

\title{Seshadri constants and degrees of defining polynomials}
\author{Atsushi Ito} 
\address{Graduate School of Mathematical Sciences, 
The University of Tokyo, 3-8-1 Komaba, 
Meguro, Tokyo, 153-8914, Japan.}
\email{itoatsu@ms.u-tokyo.ac.jp}

\address{Graduate School of Mathematical Sciences, 
The University of Tokyo, 3-8-1 Komaba, 
Meguro, Tokyo, 153-8914, Japan.}
\email{muon@ms.u-tokyo.ac.jp}
\author{Makoto Miura}

\begin{abstract}
In this paper,
we study a relation between Seshadri constants and
degrees of defining polynomials.
In particular,
we compute the Seshadri constants on
Fano varieties obtained as complete intersections
in rational homogeneous spaces of Picard number one.
\end{abstract}

\subjclass[2010]{14C20}
\keywords{Seshadri constant, defining polynomial, rational homogeneous space}
\dedicatory{Dedicated to Professor~Yujiro~Kawamata on the~occasion of his~sixtieth~birthday.}

\maketitle

%\tableofcontents

%%%%%%%%%%%%%%%%%%%%%%%%%%%
\section{Introduction}
%%%%%%%%%%%%%%%%%%%%%%%%%%%
Seshadri constant was introduced by Demailly in \cite{Dem},
as an  invariant which measures the local positivity of ample line bundles.

\begin{defn}\label{def of sc_at a point}
Let $L$ be an ample line bundle on a projective variety $X$,
and take a (possibly singular) closed point $p \in X$. 
The \textit{Seshadri constant} of $L$ at $p$ is defined to be
\[
\ep(X,L;p) := \max\{ \, t \geq 0 \, | \, \mu^* L-tE \ \text{is nef} \, \},
\]
where $\mu: \widetilde{X} \arw X$ is the blowing up at $p$ and $E= \mu^{-1}(p)$ is the exceptional divisor.

Equivalently,
the Seshadri constant can be also defined as
\[
\ep(X,L;p)=\inf_C \left\{\dfrac{C.L}{\mult_p(C)}\right \} ,
\]
where the infimum is taken over all reduced and irreducible curves $C$ on $X$ passing through $p$.
We call a curve $C$ a \textit{Seshadri curve} of $L$ at $p$
if $\ep(X,L;p)= C.L / \mult_p (C)$.
\end{defn}

Seshadri constants have many interesting properties (see \cite[Chapter 5]{La} for instance),
but it is very difficult to compute them in general.
Many authors study surface cases,
but computations in higher dimensions are rare.

Let $X$ be a projective variety embedded in $\P^N$ and $p \in X$.
The purpose of this paper is to study the Seshadri constant $\ep(X,\calo_X(1);p)$
by investigating homogeneous polynomials which define $X$.
It is easy to see that $\ep(X,\calo_X(1);p) \geq 1$ for such $X$ and $p$.
Furthermore it is known that
$\ep(X,\calo_X(1);p)=1$ holds if and only if there exists a line on $X$ passing through $p$ (cf.\ \cite[Lemma 2.2]{Ch}).

In \cite{Ba} and \cite{Ch},
Bauer and Chan give a lower bound of  $\ep(X,\calo_X(1);p)$
by using the degree $\deg(X):=\calo_X(1)^{\dim X}$
when $X$ is a surface and a 3-fold respectively.

\begin{thm}[cf.\ {\cite[Theorem 2.1]{Ba}}, {\cite[Theorem 1.4]{Ch}}]\label{Bauer and Chen}
Let $X$ be a smooth projective surface or a $3$-fold in $\P^N$, and $p \in X$ a point.
If there exists no line on $X$ passing through $p$,
it holds that
\[
\ep(X,\calo_X(1);p) \geq \frac{\mathrm{deg}(X)}{\mathrm{deg}(X)-1} .
\]
\end{thm}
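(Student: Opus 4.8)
The plan is to work with the curve characterisation of the Seshadri constant (the infimum over curves through $p$). Since $\calo_X(1)\cdot C=\deg C$ for any curve $C$ on $X\subset\P^N$, we have $\ep(X,\calo_X(1);p)=\inf_C\{\deg C/\mult_p C\}$, the infimum running over reduced irreducible curves $C$ through $p$, so it suffices to prove
\[
\deg C\ \geq\ \frac{\deg X}{\deg X-1}\,\mult_p C
\]
for every such $C$. The geometric tool will be the cone $Y$ over $C$ with vertex $p$ --- the closure of the union of the lines $\overline{px}$ for $x\in C$ --- which I would then intersect with $X$.

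Here are the steps, in order. Write $e=\deg C$, $m=\mult_p C$, $d=\deg X$. (1) Since there is no line on $X$ through $p$, the curve $C$ is not a line through $p$, so projection from $p$ carries $C$ to an honest curve and $Y$ is the cone over that curve; in particular $Y$ is an irreducible surface of degree $\leq e-m$ (a general hyperplane through $p$ meets $C$ in the point $p$ with multiplicity exactly $m$, so the projected curve has degree at most $e-m$). (2) $Y\not\subseteq X$: otherwise every line $\overline{px}$ would lie on $X$, contradicting the hypothesis; hence, as $C\subseteq X\cap Y\subsetneq Y$ and $\dim Y=2$, the intersection $X\cap Y$ is one-dimensional and contains $C$. (3) The B\'ezout inequality now gives $\deg C\leq\sum_Z\deg Z\leq\deg X\cdot\deg Y\leq d(e-m)$, the sum running over the irreducible components $Z$ of $X\cap Y$. (4) Rearranging $e\leq d(e-m)$ yields $e(d-1)\geq dm$, which is exactly the inequality sought; taking the infimum over $C$ completes the proof.

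I expect step (3) to be the part requiring the most care, because when $N>3$ the varieties $X$ and $Y$ do not meet in the expected dimension, so one really needs the excess-intersection form of B\'ezout's theorem, $\sum_Z\deg Z\leq\deg X\cdot\deg Y$. A more elementary route is to first apply a general linear projection of $X$ into $\P^3$ (when $X$ is a surface) or into $\P^4$ (when $X$ is a threefold): then $Y$ becomes an actual hypersurface, and (3) is just the observation that a degree-$(e-m)$ form vanishing along $C$ but not on $X$ cuts out on $X$ an effective divisor of degree $d(e-m)$ containing $C$. The subtlety of this reduction is that a general projection does not keep $X$ smooth, but it is a local isomorphism near $p$ --- a general centre avoids the surface $Y$ and the (at most two-dimensional) union of tangent lines of $C$ --- so $\deg C$, $\mult_p C$ and the smoothness of $X$ at the image of $p$ are all preserved, and that last fact is precisely what prevents $X$ from equalling the cone $Y$. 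Finally, I would note that the argument uses only $d\geq 2$ and the absence of lines through $p$ --- the latter being what forces every curve $C$ in the infimum to be non-linear, so that its cone is genuinely a surface --- while smoothness of $X$ and the restriction $\dim X\leq 3$ play no essential role.
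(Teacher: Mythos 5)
Your argument is correct, and it takes a genuinely different route from the paper. The paper does not reprove this statement at all: it cites \cite{Ba} and \cite{Ch}, and then subsumes it into the stronger Theorem \ref{intro thm2}, whose proof has a different shape. There one fixes a minimal-degree system $f_1,\dots,f_r$ cutting out $X$ scheme-theoretically at $p$, expands each as $f_j=\sum_i x_0^{d-i}f_j^i$, and uses the truncations to build auxiliary divisors $D_j^i\in|\calo_X(i)|$ with $\ord_p(D_j^i|_X)\ge i+1$; the no-line hypothesis enters through Lemma \ref{cone of F_p(X)}, which identifies the common zero locus of the $f_j^i$ with $\Cone F_p(X)=\{p\}$ and hence guarantees that some $D_j^i$ avoids any given curve $C$. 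Your route --- projecting $C$ from $p$ to get an irreducible cone $Y$ of degree at most $\deg C-\mult_p C$, observing that $Y\not\subseteq X$ because $Y$ is swept out by lines through $p$, and applying the refined B\'ezout inequality $\sum_Z\deg Z\le\deg X\cdot\deg Y$ to $X\cap Y\supseteq C$ --- is essentially the classical cone argument behind \cite{Ba} and \cite{Ch}, and as you note it works in every dimension and for singular $X$, so it fully proves the cited statement and more. What it does not recover is the paper's sharper constant: since $d_p(X)\le\deg X$ and $t\mapsto t/(t-1)$ is decreasing, replacing $\deg X$ by $d_p(X)$ strictly improves the bound whenever $X$ is cut out at $p$ in degree lower than $\deg X$, which is exactly the gain the paper needs for the applications to homogeneous spaces in Section \ref{section computation}. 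One small remark on your fallback via general projection: the step ruling out $\bar Y=\bar X$ there leans on smoothness of $X$ at $p$ (a cone with vertex $p$ that is not a plane is singular there), whereas your primary, intrinsic B\'ezout version needs no smoothness at all; the primary version is therefore the cleaner one to keep.
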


\begin{rem}
Bauer proved the sharpness of the lower bound as well,
that is,
for each $d \geq 3$,
there exist a smooth surface $X$ and $p \in X$
such that $d=\deg (X)$ and $\ep(X,\calo_X(1);p) = d/ (d-1)$.
Chan also constructed such $X$ and $p$ in $3$-dimensional case for $d \geq 4$,
although $X$ might have finitely many singular points.
\end{rem}

Instead of the degree,
we introduce $d_p(X)$
for a projective variety $X \subset \P^N$ and $p \in X$ (cf.\ \cite[Definition 1.8.37]{La}).

\begin{defn}\label{def_deg}
Let $X$ be a projective variety in $\P^N$
and $p \in X$ a point.
We define $d_p(X)$ to be the least integer $d$
such that the natural map
\[
H^0(\P^N,I_X \otimes \calo_{\P^N}(d)) \otimes \calo_{\P^N} \arw I_X \otimes \calo_{\P^N}(d)
\]
is surjective at $p$,
where $I_X \subset \calo_{\P^N}$ is the ideal sheaf corresponding to $X$.
In other words,
$X$ is cut out scheme theoretically by hypersurfaces of degree $d_p(X)$ at $p$.
\end{defn}

By using $d_p(X)$,
we give a lower bound of the Seshadri constant on $X \subset \P^N$ (which may be singular) in any dimensions.

\begin{thm}\label{intro thm2}
Let $X$ be a projective variety in $\P^N$ and $p \in X$ a point.
If there exists no line on $X$ passing through $p$,
it holds that
\[
\ep(X,\calo_X(1);p) \geq \frac{d_p(X)}{d_p(X)-1} .
\]

Furthermore,
this lower bound is sharp,
i.e.,
for any $n \geq 1$ and $d \geq 2$,
there exist a smooth projective variety $X \subset \P^N$ and $p \in X$
such that $n=\dim X$, $d=d_p(X)$, and $\ep(X,\calo_X(1);p) = d/(d-1)$.
\end{thm}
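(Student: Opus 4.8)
The plan is to prove the inequality of the theorem first, and then to construct, for every $n$ and $d$, an example meeting it with equality.

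\emph{The lower bound.} By the curve characterization of the Seshadri constant it suffices to show $\deg C\geq\tfrac{d}{d-1}\mult_p(C)$ for every reduced irreducible curve $C\subset X$ through $p$, where $d:=d_p(X)$ and $\deg C:=C.\calo_X(1)$; set $m:=\mult_p(C)$. This follows from two inequalities. First, $m\leq\deg C-1$: one always has $\mult_p(C)\leq\deg C$, with equality only if $C$ is a line, which is excluded by hypothesis. Second, and this is where $d_p(X)$ is used, $m\leq d-1$. To see it, let $\ell\subset\P^N$ be the tangent line at $p$ to one of the branches of $C$. Being a line through $p$, $\ell$ is not contained in $X$, so $\ell\cap X$ is finite near $p$; and since $X$ is cut out at $p$ by forms $F_1,\dots,F_k\in H^0(\P^N,I_X(d))$ of degree $d$, the scheme $\ell\cap X$ near $p$ is defined by the restrictions $F_i|_\ell$. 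At least one $F_i|_\ell$ is a nonzero form of degree $d$ on $\ell\cong\P^1$, hence vanishes at $p$ to order at most $d$, so $\ell\cap X$ has length at most $d$ at $p$. Since $C\subset X$, $\ell\cap C\subset\ell\cap X$ and thus $i_p(\ell,C)\leq d$, while tangency of $\ell$ to a branch of $C$ forces $i_p(\ell,C)\geq\mult_p(C)+1=m+1$. Hence $m\leq d-1$. Combining, $(d-1)\deg C\geq(d-1)(m+1)=(d-1)m+(d-1)\geq(d-1)m+m=dm$, which is the claim. (Equivalently, on the blow-up $\mu\colon\widetilde X\arw X$ at $p$ this says that $(d-1)\mu^{*}\calo_X(1)-dE$ is nef, and the only curves for which this is not immediate are strict transforms of curves through $p$, where it is exactly the inequality above.)

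\emph{The examples.} Equality requires a curve $C\ni p$ with $\deg C/\mult_p(C)=d/(d-1)$, the natural model being the rational plane curve $C_d\subset\P^2$ of degree $d$ with a point of multiplicity $d-1$, for instance the image of $\P^1\arw\P^2$, $[s:t]\mapsto[s^{d}:s^{d-1}t:t^{d}]$, which satisfies $\mult_p(C_d)=d-1$ at $p=[0:0:1]$ and $\ep(C_d,\calo_{C_d}(1);p)=d/(d-1)$. The construction goes by cases. For $d=2$ and any $n\geq1$, take $X=v_2(\P^n)$, the quadratic Veronese (a smooth conic if $n=1$): then $\ep(X,\calo_X(1);p)=\ep(\P^n,\calo_{\P^n}(2);p)=2$, no line lies on $X$, and $d_p(X)=2$ since $v_2(\P^n)$ is cut out by quadrics but not by linear forms. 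For $d\geq3$ and $n=2$, take $X\subset\P^3$ a smooth surface of degree $d$ chosen, following Bauer, so that a suitable plane section $X\cap H$ through $p$ is an irreducible plane curve with a $(d-1)$-fold point at $p$, and so that no line of $X$ passes through $p$; then $C:=X\cap H$ gives $\ep\leq d/(d-1)$, hence equality by the lower bound, and $d_p(X)=d$ because a smooth degree-$d$ surface in $\P^3$ is cut out precisely by its defining equation. For $d\geq3$ and $n\geq3$, put $X=X'\times v_2(\P^{n-2})$ with polarization $\calo_{X'}(1)\boxtimes\calo(1)$, where $X'$ is the surface just constructed: then $X$ is smooth of dimension $n$; no line passes through $(p,q)$, since such a line would project to a line through $p$ on $X'$ or to a line on $v_2(\P^{n-2})$, both impossible; the curve $C_d\times\{q\}$ gives $\ep(X,\calo_X(1);(p,q))\leq d/(d-1)$; and the lower bound gives the reverse inequality once $d_{(p,q)}(X)=d$ is checked, which can be read off the ideal of the Segre product (generated by quadrics together with the pulled-back ideals of the two factors).

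The step I expect to be the real obstacle is the surface construction for $d\geq3$: producing a genuinely \emph{smooth} surface of degree $d$ in $\P^3$ that carries an \emph{irreducible} plane section with a $(d-1)$-fold point at a prescribed point $p$, while forbidding all lines of $X$ through $p$. Once this is in hand the lower bound already delivers $\ep\geq d/(d-1)$, so the remaining cases are transported almost for free, using only that $v_2(\P^{n-2})$ has Seshadri constant $2\geq d/(d-1)$ and contains no line. The two elementary inputs to the lower bound — that $\mult_p(C)\leq\deg C-1$ for an irreducible non-line $C$, and that a tangent line meets a curve at $p$ with multiplicity at least $\mult_p(C)+1$ — are standard and should present no difficulty.
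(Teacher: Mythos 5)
Your proof of the lower bound has a fatal gap, and the intermediate statement it rests on is actually false. The claim that tangency of $\ell$ to a branch of $C$ forces $i_p(\ell,C)\geq \mult_p(C)+1$ is a \emph{plane-curve} fact; it fails for curves in $\P^N$ with $N\geq 3$, because the ideal of a space curve contains extra elements whose restriction to the tangent line has low order. For instance, for the unibranch germ $C=\{(t^2,t^4,t^5)\}\subset \A^3$ one has $\mult_p(C)=2$ and tangent line $\ell$ the $x_1$-axis, but $g=x_2-x_1^2$ vanishes on $C$ and restricts to $\ell$ with order $2$, so $i_p(\ell,C)\leq 2=\mult_p(C)$. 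Consequently your inequality $\mult_p(C)\leq d_p(X)-1$ is false, and not just its proof: the paper's own sharpness examples refute it. On a general hypersurface $X\subset\P^{n+1}$ of degree $d\geq n+1$ of the form $f=x_0^{d-1}f^1+\cdots+x_0^{d-n+1}f^{n-1}+x_0f^{d-1}+f^d$ (so $d_p(X)=d$ and $F_p(X)=\emptyset$), the complete intersection curve $C=(f^1=\cdots=f^{n-1}=x_0f^{d-1}+f^d=0)$ has $\mult_p(C)=(n-1)!\,(d-1)$, which exceeds $d-1$ for every $n\geq 3$; note $\deg C/\mult_p C$ still equals $d/(d-1)$, so your chain $(d-1)\deg C\geq (d-1)(\mult_p C+1)\geq d\,\mult_p C$ cannot be salvaged --- the middle term is strictly smaller than the right-hand one in these examples. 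The paper's argument is genuinely different: writing the degree-$d$ generators as $f_j=\sum_i x_0^{d-i}f_j^i$, it uses the partial sums $D_j^i=(x_0^{i-1}f_j^1+\cdots+f_j^i=0)$, which cut divisors in $|\calo_X(i)|$ with $\ord_p(D_j^i|_X)\geq i+1$ (because on $X$ the partial sum equals minus the tail) and whose common zero locus on $X$ is $\Cone F_p(X)=\{p\}$; then $i\deg C\geq (i+1)\mult_p C$ follows from $C.D\geq \ord_p(D)\,\mult_p(C)$ for some $i\leq d-1$. Some divisor of intermediate degree $i$ with excess vanishing at $p$ is unavoidable here; no argument with a single tangent line can see the bound.

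On sharpness: your $d=2$ Veronese example is fine, but the $d\geq 3$ cases are not self-contained --- you import Bauer's surface construction as a black box (which you yourself flag as the hard step) and then rely on an unverified computation of $d_{(p,q)}$ for a Segre product; moreover the whole sharpness argument presupposes the lower bound, which as it stands is unproven. The paper instead writes down, for $d\geq n+1$, an explicit smooth hypersurface together with the complete intersection curve above, and handles $2\leq d\leq n$ via its Theorem 1.7 on complete intersections in homogeneous spaces.
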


\begin{rem}
It holds $d_p(X) \leq \deg(X)$ for any projective variety $X \subset \P^N$ and $p \in X$
(see the proof of \cite[Theorem 1]{Mu}).
Thus Theorem \ref{intro thm2} improves Theorem \ref{Bauer and Chen} even in the cases $\dim X=2,3$.
\end{rem}

In Section \ref{section computation},
we compute the Seshadri constants on some varieties $X \subset \P^N$
by finding a curve $C$ such that $\deg (C) / \mult_p (C) $ coincides with the lower bound $d_p(X) /(d_p(X)-1) $.
As a special case,
we obtain the following theorem.

\begin{thm}[=Corollary \ref{rational_homog}]\label{intro_thm1}
Let $Y \subset \P^N$ be a rational homogeneous space of Picard number $1$,
which is embedded by the ample generator.
Let $X$ be a complete intersection variety in $Y$ of hypersurfaces of degrees $d_1 \leq \ldots \leq d_r$
such that $-K_X=\calo_X(1)$
and $p \in X$.
If there exists no line on $X$ passing through $p$,
it holds that
\begin{align*}
\ep (X,\calo_X(1); p) =  \frac{d_p(X)}{d_p(X)-1} = \left\{ 
\begin{array}{cl} 
d_r/(d_r-1) & \text{when } d_r \geq 2 ,\\ 
2 & \text{when }  d_r =1 .\\
\end{array} \right.
\end{align*}
\end{thm}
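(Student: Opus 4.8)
The plan is to take the lower bound for free from Theorem~\ref{intro thm2} and to match it with a single explicit Seshadri curve.

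\emph{Reduction to a curve.} A rational homogeneous space of Picard number one, in its minimal embedding, is cut out scheme‑theoretically by quadrics (a theorem of Kostant). Writing $X=Y\cap H_1\cap\dots\cap H_r$ with $\deg H_j=d_j$, the ideal sheaf of $X$ is generated near $p$ by the quadrics defining $Y$ together with defining equations of $H_1,\dots,H_r$, so $d_p(X)\le\max(2,d_r)$; moreover $d_p(X)\ge2$, since $d_p(X)=1$ would force $X$ to coincide near $p$ with a linear subspace and hence to contain a line through $p$. Thus $d_p(X)=2$ when $d_r=1$ and $2\le d_p(X)\le d_r$ when $d_r\ge2$. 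Since $t\mapsto t/(t-1)$ is decreasing, Theorem~\ref{intro thm2} gives
\[
\ep(X,\calo_X(1);p)\ \ge\ \frac{d_p(X)}{d_p(X)-1}\ \ge\ \frac{\max(2,d_r)}{\max(2,d_r)-1}.
\]
Hence it suffices to produce a reduced irreducible curve $C\subset X$ with $p\in C$ and $\deg C/\mult_p C\le\max(2,d_r)/(\max(2,d_r)-1)$: the displayed chain then collapses to equalities, which in particular pins down $d_p(X)=\max(2,d_r)$.

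\emph{The Seshadri curve for $d_r\ge2$.} Adjunction gives $-K_Y=\calo_Y\big(1+\sum_j d_j\big)$, so the Fano index of $Y$ is $i_Y=1+\sum_j d_j$; put $X':=Y\cap H_1\cap\dots\cap H_{r-1}$, a Fano variety with $-K_{X'}=\calo_{X'}(1+d_r)$. First I would show that the lines $\ell$ of $X'$ through $p$ along which a defining equation of $H_r$ vanishes at $p$ to order $\ge d_r-1$ — equivalently, which osculate $X$ at $p$ to order $d_r-1$ — form a family of dimension $\ge1$: the lines of $X'$ through $p$ already form a family of dimension $\ge(1+d_r)-2=d_r-1$, and the osculation condition is the vanishing of forms of degrees $1,\dots,d_r-2$ in the tangent direction, i.e.\ at most $d_r-2$ conditions. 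Taking a $1$‑dimensional subfamily and letting $S$ be the cone it sweeps out — a surface in $X'$ with vertex $p$, so $\deg S=\mult_p S$ — I set $C:=S\cap H_r\subset X$. Because the equation of $H_r$ vanishes to order exactly $d_r-1$ at $p$ on a general ruling of $S$, its terms of degree $<d_r-1$ vanish identically on $S$, so the tangent cone of $C$ at $p$ is the degree‑$(d_r-1)$ section of the cone $S$; therefore $\deg C=d_r\deg S$ and $\mult_p C=(d_r-1)\deg S$, giving ratio $d_r/(d_r-1)$ exactly. (For $Y=\P^3$ and $X$ a cubic surface this is the familiar tangent‑plane section $S=T_pX$, $C=T_pX\cap X$; for $Y=\P^4$ and $X$ a quartic threefold, $S$ is the quadric cone of the lines osculating $X$ to second order at $p$ and $C=S\cap X$ has degree $8$ and multiplicity $6$.)

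\emph{The case $d_r=1$.} Here $X$ is a linear section of $Y$ and $\max(2,d_r)=2$; the line construction degenerates and one runs the same argument one level higher with conics. With $X':=Y\cap H_1\cap\dots\cap H_{r-1}$, now a Fano variety with $-K_{X'}=\calo_{X'}(2)$, the conics of $X'$ through $p$ that are tangent at $p$ to the hyperplane $H_r$ form a family of dimension $\ge 2-1=1$; sweeping a surface with a $1$‑dimensional subfamily and intersecting with $H_r$ yields a curve on $X$ through $p$ of ratio $2$. (Equivalently, one may just observe that $X$, being Fano with $-K_X=\calo_X(1)$ and containing no line through $p$, carries a conic through $p$, which already has ratio $2$.)

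\emph{Where the difficulty lies.} The arithmetic above is routine; the real content is to show that the family of osculating lines (resp.\ conics) through $p$ is genuinely nonempty of the expected dimension for an \emph{arbitrary} complete intersection $X$ of the prescribed type and an \emph{arbitrary} point $p$, which may be a singular point of $X$. This is exactly where the hypothesis on $Y$ is used: the variety of lines of $Y$ through a point is smooth of dimension $i_Y-2$ and well understood, and since $\Aut(Y)$ is transitive one may put $p$ in a convenient position and track how the sections $H_1,\dots,H_{r-1}$ and the osculation conditions imposed by $H_r$ cut this variety down. One must also check that the resulting curve can be taken reduced and irreducible with the stated multiplicity at $p$; if $S\cap H_r$ fails one of these, one passes to a component through $p$, and here the lower bound already established is decisive — it forces every irreducible component of any curve through $p$ with sufficiently small $\deg/\mult_p$ to realize the extremal ratio.
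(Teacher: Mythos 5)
Your lower bound and your treatment of the case $d_r\ge 2$ are essentially the paper's own argument in different language: the surface you call $S$ (the cone over the lines of $X'$ through $p$ osculating $H_r$ to order $d_r-1$) is exactly the paper's $\Cone Z$ cut by the graded pieces $f_j^i$ ($j\le r-1$) and $f_r^1,\dots,f_r^{d_r-2}$, and your curve $S\cap H_r$ is the paper's $C$, since $f_r|_S=x_0f_r^{d_r-1}+f_r^{d_r}$. Two small differences: the paper secures irreducibility, reducedness and the exact degree/multiplicity of $C$ by first invoking lower semicontinuity of Seshadri constants to make all $f_j^i$ general, whereas you propose an averaging argument over the components of the cycle $S\cap H_r$; the latter can be made to work (one only needs $\deg C=d_r\deg S$ as cycles and $\mult_p C\ge(d_r-1)\mult_p S=(d_r-1)\deg S$, and then some component realizes the ratio), but you should say this rather than gesture at it. Also, since $p$ may be a singular point of $X'$, the dimension count for the osculating lines should be run via Lemma \ref{cone of F_p(X)} (intersecting the projective variety $F_p(Y)$, of dimension $\sum_j d_j-1$, with $\sum_j d_j-2$ hypersurfaces), not via deformation theory on $X'$.

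The case $d_r=1$ has a genuine gap. Your entire argument is predicated on the conics of $X'$ through $p$ forming a nonempty family of dimension $2$, and your parenthetical fallback ("one may just observe that $X$ \dots carries a conic through $p$") is precisely the statement that has to be proved, not an observation: a Fano variety with $-K_X=\calo_X(1)$ is not known a priori to contain a conic through a given point, and the deformation-theoretic bound $\dim R_p\ge -K\cdot C-2$ only applies once a conic is known to exist (and only at smooth points). This existence is the actual content of the paper's Section \ref{subsection finding conic}: one first reduces to general $H_j$ and general $p$ by lower semicontinuity, shows $F_p(X')$ is a finite set with at least two points --- this uses Hwang's theorem that $F_p(Y)$ is not a linear subspace of $\P^{N-1}$ --- and smooths the union of two free lines into a free conic on $X'$; one then needs the separate argument of Proposition \ref{existence_of_conic} (incidence variety plus the normal-bundle computation $N_{C/X}\cong\calo_C^{\oplus n-1}$) to show a conic survives on the hyperplane section $X=X'\cap H_r$. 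Your replacement for that last step also does not work as written: the surface swept out by a one-dimensional family of conics through $p$ is not a cone, so $\mult_p S\ne\deg S$ in general and "intersecting with $H_r$ yields a curve of ratio $2$" does not follow. (The correct way to finish from your setup is to note that a conic tangent to $H_r$ at $p$ and not contained in $H_r$ meets $H_r$ only at $p$, so if no conic of the family lay in $H_r$ then $S\cap H_r$ would be zero-dimensional, which is absurd; hence some conic lies in $X$. But the existence of the family remains unestablished in your write-up.)
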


\begin{rem}\label{rem for thm 1}
If $X$ is a complete intersection variety in $Y$ of hypersurfaces of degrees $d_1 \leq \ldots \leq d_r$
such that $-K_X=\calo_X(i)$ for $i \geq 2$,
it is easy to check that $X$ is covered by lines (see Lemma \ref{cone of F_p(X)}).
Thus $\ep(X,\calo_X(1);p) = 1$ holds for any $p \in X$.

Hence Theorem \ref{intro_thm1} states that
we can compute $\ep(X,\calo_X(1);p)$
for any Fano variety $X$ obtained as a complete intersection in any rational homogeneous space of Picard number $1$.
\end{rem}

Throughout this paper,
all schemes are defined over the complex number field $\C$.
In Section \ref{section lower bound}, we give the proof of Theorem \ref{intro thm2}.
In Section \ref{section computation}, we compute Seshadri constants on some varieties.

\subsection*{Acknowledgments}
The authors would like to express their gratitude to Professor Yujiro Kawamata
for his valuable advice, comments, and warm encouragement.
They are also grateful to Professors Katsuhisa Furukawa and Kiwamu Watanabe
for their useful comments and suggestions.

%%%%%%%%%%%%%%%%%%%%%%%%%
%\section{Preliminary}\label{section preliminary}
%%%%%%%%%%%%%%%%%%%%%%%%%

%%%%%%%%%%%%%%%%%%%%%%%%%
\section{Lower bounds}\label{section lower bound}
%%%%%%%%%%%%%%%%%%%%%%%%%

In this paper,
a \textit{line} means a projective curve of degree $1$ in $\P^N$.
The moduli of lines plays an important role in this paper.

\begin{defn}
Let $X$ be a projective scheme in $\P^N$
and $p \in X$ a point.
We denote by $F_p(X)$ the moduli space of lines on $X$ passing through $p$.
Note that $F_p(X)$ is naturally embedded in $F_p(\P^N) \cong \P^{N-1}$.
\end{defn}

For a graded ring $S$,
we denote by $S_i$ the set of all homogeneous elements of degree $i$.
We will use the following lemma in Sections \ref{section lower bound}, \ref{section computation}.

\begin{lem}\label{cone of F_p(X)}
Let $X$ be a projective scheme in $\P^N$
and $p \in X$ a point.
Fix homogeneous coordinates $x_0,\ldots,x_N$ on $\P^N$
such that $p=[1:0:\cdots:0]$.
Assume that $X$ is defined by homogeneous polynomials $\{f_j\}_{1 \leq j \leq r}$ around $p$,
and write $f_j=\sum_{i=1}^{d_j} x_0^{d_j-i} f_j^i $ for $d_j=\deg f_j$ and $f_j^i \in \C[x_1,\ldots,x_N]_i$.
Then $F_p(X) \subset F_p(\P^N) \cong \Proj \C[x_1,\ldots,x_N]$ is defined by $\{ f_j^i \}_{1 \leq j \leq r,1 \leq i \leq d_j}$.
\end{lem}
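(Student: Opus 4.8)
The plan is to unwind the definition of the Fano scheme of lines through $p$ as a subscheme of $\mathbb{P}^{N-1}$ and to express the defining equations of $X$ in coordinates adapted to a line through $p$. I would parametrize a line $\ell$ through $p=[1:0:\cdots:0]$ by a second point $q=[q_1:\cdots:q_N]$ (with the first coordinate set to zero, since $\mathbb{P}^{N-1}=F_p(\mathbb{P}^N)$ is precisely the space of such directions), so that $\ell$ consists of the points $[s:tq_1:\cdots:tq_N]$ for $[s:t]\in\mathbb{P}^1$. The line $\ell$ is contained in $X$ (scheme-theoretically near $p$) if and only if every defining polynomial $f_j$ vanishes identically along $\ell$, i.e. $f_j(s,tq_1,\ldots,tq_N)\equiv 0$ as a polynomial in $s,t$.

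The key computation is then to substitute into the given decomposition $f_j=\sum_{i=1}^{d_j}x_0^{d_j-i}f_j^i$ with $f_j^i\in\mathbb{C}[x_1,\ldots,x_N]_i$. Since $f_j^i$ is homogeneous of degree $i$, we get
\[
f_j(s,tq_1,\ldots,tq_N)=\sum_{i=1}^{d_j}s^{d_j-i}\,t^{i}\,f_j^i(q_1,\ldots,q_N).
\]
Because the monomials $s^{d_j-i}t^{i}$ for $i=1,\ldots,d_j$ are linearly independent in $\mathbb{C}[s,t]$, this vanishes identically in $(s,t)$ if and only if $f_j^i(q_1,\ldots,q_N)=0$ for all $i=1,\ldots,d_j$. (One should note that the $i=0$ term is absent precisely because $f_j$ vanishes at $p$, i.e. $x_0^{d_j}$ does not occur; this is where the hypothesis $p\in X$ enters, so that the decomposition starts at $i=1$.) Running this over all $j$ shows that a point $[q_1:\cdots:q_N]\in\mathbb{P}^{N-1}$ lies in $F_p(X)$ if and only if all the $f_j^i$ vanish at it, which is exactly the assertion that $F_p(X)$ is the subscheme of $\Proj\mathbb{C}[x_1,\ldots,x_N]$ cut out by $\{f_j^i\}_{1\le j\le r,\,1\le i\le d_j}$.

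To make this an actual scheme-theoretic identification rather than merely a set-theoretic one, I would phrase the argument functorially: for an arbitrary test scheme $T$, a $T$-point of $F_p(\mathbb{P}^N)$ is a line bundle quotient giving a family of lines through $p$, which can be trivialized locally as $[s:tq_1(\cdot):\cdots:tq_N(\cdot)]$ with $q_k\in\calo_T$, and the condition that this family lands in $X$ is the vanishing of the pullback of each $f_j$, which by the same linear-independence-of-monomials argument (now over $\calo_T[s,t]$) is equivalent to the vanishing of the pulled-back $f_j^i$. The main subtlety — and the only place one must be a little careful — is this passage from the pointwise statement to the scheme structure, together with keeping track that "defined around $p$" is enough: both $F_p(X)$ and the zero scheme of the $f_j^i$ only see an affine neighborhood of the single point $p$ on each line, so there is no loss in working with the local equations $\{f_j\}$ rather than a global ideal. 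Everything else is the routine homogeneity bookkeeping indicated above.
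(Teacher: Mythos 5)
Your argument is correct and is precisely the computation the paper has in mind: the paper's own proof simply defers to Debarre, Proposition 2.13\,(a), which is this same substitution of a parametrized line $[s:tq_1:\cdots:tq_N]$ into $f_j$ and comparison of the coefficients of $s^{d_j-i}t^i$. The one point worth tightening is your closing remark that local equations suffice because everything ``only sees a neighborhood of $p$'': the cleaner justification is that a line contained in $V(f_1,\dots,f_r)$ meets the closed subscheme $X$ in a dense open subset of itself, hence is contained in $X$ entirely.
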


\begin{proof}
The proof is similar to that of Proposition 2.13 (a) in \cite{Deb}.
We leave the details to the reader.
\end{proof}

For a variety $X$, a point $p \in X$, and an effective Cartier divisor $D$,
we define $\ord_p(D):=\max \{ \, m \in \N \, | \, f  \in \m_p^m \}$,
where $f$ is a defining function of $D$ at $p$.
The following lemma is easy and well known,
but we prove it for the convenience of the reader.

\begin{lem}\label{local intersection number}
Let $C $ be a curve on a projective variety $X$ and $p \in C$ a point.
For an effective Cartier divisor $D$ on $X$ not containing $C$,
it holds
\[
C.D \geq \ord_p (D ) \cdot \mult_p (C) .
\]
\end{lem}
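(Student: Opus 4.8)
The plan is to translate the global intersection number $C.D$ into a sum of local lengths on $C$, discard the contributions coming from points other than $p$, and estimate the remaining local length at $p$ by elementary multiplicity theory. We may assume $p\in D$ (otherwise $\ord_p(D)=0$ and the assertion is trivial) and that $C$ is irreducible. Since $D$ does not contain $C$, a local equation $f$ of $D$ at $p$ restricts to a nonzerodivisor in $R:=\calo_{C,p}$, so the scheme-theoretic intersection $D\cap C$ is an effective Cartier divisor on $C$ and $\calo_X(D)|_C\cong\calo_C(D\cap C)$. Therefore
\[
C.D=\deg_C\calo_C(D\cap C)=\sum_{q\in D\cap C}\mathrm{length}_{\C}\!\big(\calo_{C,q}/(f_q)\big)\ \geq\ \mathrm{length}_{\C}\!\big(R/(f)\big),
\]
where $f_q$ denotes a local equation of $D$ at $q$; the inequality merely drops the remaining nonnegative terms.

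Next let $\m$ be the maximal ideal of $R$ and set $m:=\ord_p(D)$, so that $f\in\m_p^m\subset\calo_{X,p}$ and hence $f\in\m^m$ in $R$. Because $f$ is a nonzerodivisor and $\dim R=1$, the ideal $(f)$ is $\m$-primary and is generated by a single system of parameters, so $\mathrm{length}_{\C}(R/(f))\geq e\big((f),R\big)$, the Hilbert--Samuel multiplicity. From the inclusion $(f)\subseteq\m^m$ and monotonicity of the multiplicity under inclusions of $\m$-primary ideals we get $e\big((f),R\big)\geq e(\m^m,R)$, and finally $e(\m^m,R)=m^{\dim R}\,e(\m,R)=m\cdot\mult_p(C)$, using $\dim R=1$ and $\mult_p(C)=e(\m,R)$. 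Concatenating these inequalities yields $C.D\geq\ord_p(D)\cdot\mult_p(C)$.

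The computation is routine and I do not expect a genuine obstacle; the only points needing (standard) care are that $\calo_{C,p}$ is one-dimensional, so that a single nonzerodivisor is already a full system of parameters --- this is where the hypothesis that $D$ does not contain $C$ enters, forcing $f\neq 0$ --- together with the three classical facts about Hilbert--Samuel multiplicities invoked above ($\mathrm{length}(R/\mathfrak q)\geq e(\mathfrak q,R)$ for parameter ideals, $e(\mathfrak q,R)\geq e(\mathfrak q',R)$ when $\mathfrak q\subseteq\mathfrak q'$, and $e(\m^m,R)=m^{\dim R}e(\m,R)$). Alternatively one can bypass multiplicity theory altogether by pulling back along the normalization $\nu\colon\widetilde C\to C$: then $C.D=\deg_{\widetilde C}\nu^{*}\calo_X(D)$ equals a sum of vanishing orders of $\nu^{*}f$, each point of $\nu^{-1}(p)$ contributes at least $m$ times the corresponding branch multiplicity, and the branch multiplicities over $p$ sum to $\mult_p(C)$.
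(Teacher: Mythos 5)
Your main argument is correct, and it takes a genuinely different (more algebraic) route than the paper. You work entirely in the local ring $R=\calo_{C,p}$ and chain together three standard facts about Hilbert--Samuel multiplicities: $\mathrm{length}(R/(f))\geq e((f),R)$ for the parameter ideal $(f)$, the monotonicity $e((f),R)\geq e(\m^m,R)$ coming from $(f)\subseteq\m^m$, and $e(\m^m,R)=m\,e(\m,R)=m\cdot\mult_p(C)$ since $\dim R=1$. The paper instead passes to the normalization $\nu:\widetilde{C}\arw C$: writing $\calo_{\widetilde{C}}(-E)=\nu^{-1}\m_{C,p}$ with $\deg E=\mult_p(C)$, the inclusion $\calo_C(-D|_C)\subset\m_{C,p}^{\ord_p(D|_C)}$ gives $\nu^*D|_C\geq\ord_p(D|_C)\,E$ as divisors on the smooth curve $\widetilde{C}$, and taking degrees finishes the proof (combined with $\ord_p(D|_C)\geq\ord_p(D)$, which is exactly your observation that $f\in\m_p^m$ in $\calo_{X,p}$ forces $f\in\m^m$ in $R$). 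The two arguments are close in content --- the identity $\deg E=\mult_p(C)$ that the paper invokes is itself the one-dimensional multiplicity computation you perform algebraically --- but the paper's version hides the commutative algebra inside that single standard fact and then only manipulates divisors on a smooth curve, whereas yours is self-contained at the level of local rings, avoids the normalization, and would apply verbatim to any one-dimensional local ring. Your closing ``alternative'' sketch via the normalization is precisely the paper's proof.
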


\begin{proof}
Let $\nu : \widetilde{C} \arw C$ be the normalization.
Then there exists an effective divisor $E $ on $\widetilde{C}$
such that $ \calo_{\widetilde{C}}(-E)=\nu^{-1}   \m_{C,p} $ and $ \deg (E)=\mult_p (C)$.
Since $\calo_C (- D |_C) \subset  \m_{C,p}^{\ord_p (D |_C)} $,
we have
\[
\calo_{\widetilde{C}}( - \nu^* D |_C) \subset  \calo_{\widetilde{C}}(- \ord_p (D |_C) E).
\]
This means
\begin{align*}
D.C=\deg (\nu^* D |_C) &\geq  \deg (\ord_p (D |_C) E) \\
&= \ord_p (D |_C) \cdot  \mult_p (C). 
\end{align*}
Since $\ord_p(D |_C) \geq \ord_p(D)$,
this lemma is proved.
\end{proof}

Now we can prove Theorem \ref{intro thm2}.
The idea is simple.
For any curve $C$ on $X$ passing through $p$,
we find a suitable divisor $D \in |\calo_X(i)|$ not containing $C$ for some $i$
and apply Lemma \ref{local intersection number}.

\begin{proof}[\bf{Proof of Theorem} \ref{intro thm2}]
Let $x_0,\ldots,x_N$ be homogeneous coordinates on $\P^N$
such that $p=[1:0:\cdots:0]$,
and set $d=d_p(X)$.
Choose and fix a basis $f_1,\ldots,f_r$
of $ H^0(\P^N,I_X \otimes \calo_{\P^N}(d) ) \subset \C[x_0,\ldots,x_N]_d$.
As in Lemma \ref{cone of F_p(X)},
we can write
\[
f_j=x_0^{d-1}f_j^1 + x_0^{d-2}f_j^2 + \cdots + x_0 f_j^{d-1} + f_j^d
\]
for some $f_j^i \in \C[x_1,\ldots,x_N]_i$.

For $1 \leq i \leq d-1, \, 1 \leq j \leq r$,
we set
\[
D_j^i = (x_0^{i-1} f_j^1 + \cdots + x_0 f_j^{i-1} + f_j^i=0) \subset \P^N.
\]
Note that $D_j^i$ can be $\P^N$ itself.
By the definition of $D_j^i$,
it holds
\begin{align}
\tag{$*$}
\begin{split}
X \ \cap  \bigcap_{1 \leq i \leq d-1, 1 \leq j \leq r }  D_j^i \ \
&\subset \ \ \bigcap_{1 \leq j \leq r} (f_j=0) \ \cap  \bigcap_{1 \leq i \leq d-1, 1 \leq j \leq r }  D_j^i \\
&=  \ \ \bigcap_{1 \leq i \leq d, 1 \leq j \leq r } (f_j^i = 0).
\end{split}
\end{align}
By the definition of $d_p(X)$,
we have $X= (f_1 = \cdots = f_r=0)$ around $p$.
Hence
the last term of $(*)$ is nothing but $\Cone F_p(X)$ by Lemma \ref{cone of F_p(X)},
where $\Cone F_p(X)$ is the projective cone of $F_p(X)$ in $\P^N$ with the vertex at $p$.
Since $F_p(X)=\emptyset$ by assumption,
we have
\begin{align}
\tag{$\dagger$} \bigcap_{1 \leq i \leq d-1, 1 \leq j \leq r }  D_j^i |_X  = \{ p \}.
\end{align}

Fix a curve $C \subset X$  passing through $p$.
To show the inequality in this theorem,
it suffices to show
\[
\frac{\deg (C)}{\mult_p (C)} \geq \frac{d}{d-1} . 
\]
By $(\dagger)$,
there exist $1 \leq i \leq d-1, \, 1 \leq j \leq r  $
such that $D_j^i |_X$ does not contain $C$.
In particular,
$D_j^i |_X $ is an effective divisor on $X$ not containing $C$.
Since $x_0^{d-i}(x_0^{i-1} f_j^1 + \cdots + x_0 f_j^{i-1} + f_j^i) = -(x_0^{d-i-1}f_j^{i+1}  \cdots + x_0 f_j^{d-1} + f_j^d )$ on $X$,
it holds that
\begin{align*}
\ord_p (D_j^i |_X) &= \ord_p \left( \frac{x_0^{i-1} f_j^1 + \cdots + x_0 f_j^{i-1} + f_j^i}{x_0^i} \, \bigg|_X \right) \\
                        &= \ord_p \left( - \frac{x_0^{d-i-1}f_j^{i+1}  \cdots + x_0 f_j^{d-1} + f_j^d}{x_0^d} \, \bigg|_X \right) \geq i+1.
\end{align*}
Since $D_j^i |_X \sim \calo_X(i)$,
we have
\[
 i \, \deg(C) = (D_j^i |_X) .C \geq \ord_p(D_j^i |_X) \cdot \mult_p(C) \geq (i+1) \mult_p(C)
\]
by Lemma \ref{local intersection number}.
Hence 
it holds $ \deg (C) / \mult_p (C) \geq (i +1) / i \geq d/(d-1)  $ by $1 \leq i \leq d-1$,
and the inequality of this theorem is shown.

\vspace{5mm}
Now, we show the sharpness of the lower bound.
First, assume $d \geq n+1$.
Set
\[
f=x_0^{d-1}f^1 + \cdots + x_0^{d-n+1} f^{n-1} + x_0 f^{d-1} + f^d \in \C[x_0,\ldots,x_{n+1}]
\]
for a general $f^i \in \C[x_1,\ldots,x_{n+1}]_i$ for each $1 \leq i \leq n-1$ and $i=d-1,d$.
Then $f$ defines a smooth hypersurface $X \subset \P^{n+1}$ containing $p=[1:0:\cdots:0]$.
By the generality of $f^i$,
\[
F_p(X) =( f^1=\cdots=f^{n-1}=f^{d-1} = f^d=0) \subset \Proj \C[x_1,\ldots,x_{n+1}]
\]
is empty.
Set 
\[
C= (f^1=\cdots=f^{n-1}=x_0 f^{d-1} + f^d=0) \subset \P^{n+1}.
\]
By definition,
$C$ is contained in $X$ and contains $p$.
Since all $f^i$ are general,
$C$ is a complete intersection curve.
Hence
\[
\deg(C)=(n-1)! \cdot d, \quad \mult_p (C)=(n-1)! \cdot (d-1)
\]
hold, and we have
\[
\ep(X,\calo_X(1);p) \leq \deg(C) / \mult_p(C) =d / (d-1).
\]
Since $d_p(X)=d$ and $F_p(X)= \emptyset$,
it holds $\ep(X,\calo_X(1);p) =d / (d-1) $ by the inequality of this theorem.

When $d \leq n$, we use Theorem \ref{intro_thm1},
which is proved in the next section.
(Of course, we do not use the sharpness assertion in Theorem \ref{intro thm2}
to show Theorem \ref{intro_thm1}.)
For $2 \leq d \leq n$,
choose positive integers $r$ and $d_1 \leq \cdots \leq d_r$
such that $d=d_r$ and $\sum_{j=1}^r d_j =n+r$.

Applying Theorem \ref{intro_thm1} to $Y=\P^{n+r}$,
we have $ \ep (X,\calo_X(1); p)=d_r/(d_r-1) =d/(d-1) $
for a smooth complete intersection $X$ of hypersurfaces of degrees $d_1 \leq \ldots \leq d_r$
and general $p \in X$.
Note that we can easily check $F_p(X) = \emptyset$ for general $p \in X$ by using Lemma \ref{cone of F_p(X)}.
Since $d_p(X)=d_r=d$,
the sharpness is proved. 
\end{proof}

%%%%%%%%%%%%%%%%%%%%%%%%
\section{Finding Seshadri curves}\label{section computation}
%%%%%%%%%%%%%%%%%%%%%%%%

In \cite{It},
the first author computes Seshadri constants on some Fano manifolds
at a very general point.
In the paper,
toric degenerations are used to estimate Seshadri constants from below.
Since the lower semicontinuity is used there for lower bounds,
we have to assume some very generality in the method.
Instead of toric degenerations,
we use the lower bound in Theorem \ref{intro thm2} here.
For upper bounds,
we find Seshadri curves
similar to \cite{It}.
That is,
for some variety $X$,
we can find a curve $C \subset X$ passing through $p$
such that $\deg(C)/ \mult_p(C) $ coincides with the lower bound in Theorem \ref{intro thm2}.

To show Theorem \ref{intro_thm1},
we treat the case $d_r \geq 2$ in Subsection \ref{subsection cutting}.
In that case,
we construct a Seshadri curve 
by cutting a suitable cone in $X$ by hypersurfaces.
In Subsection \ref{subsection finding conic},
we treat the case $d_r = 1$.
In that case,
we show the existence of a conic $C \subset X$ passing through $p$
by using the deformation theory. 
In Subsection \ref{subsection_proof_of_cor},
we prove Theorem \ref{intro_thm1}.

\subsection{Cutting cones by hypersurfaces}\label{subsection cutting}

In this subsection,
we prove the following theorem,
from which the case $d_r \geq 2$ in Theorem \ref{intro_thm1} follows immediately.

\begin{thm}\label{main_thm}
Let $Y$ be a projective variety in $\P^N$
and $p \in Y$ a point.
For a subvariety $X \subset Y$ containing $p$,
we assume the following:
\begin{itemize}
\item[i)] Around $p$, $X$ is a locally complete intersection in $Y$ of hypersurfaces of degrees $d_1 \leq \ldots \leq d_r$.
That is, $r=\codim (X,Y)$ and
there exists $f_j \in H^0(\P^N,\calo_{\P^N}(d_j))$ for each $j$
such that
\[
X=Y \cap \bigcap_{1 \leq j \leq r} (f_j=0)
\]
holds in a neighborhood of $p$.
\item[ii)] $F_p(Y) \not = \emptyset$ and $\sum_{j=1}^r d_j \leq \dim F_p(Y) +1$.
\item[iii)] $d_p(Y) \leq d_r$.
\end{itemize}
Then it holds that
\begin{align*}
\ep (X,\calo_X(1); p) = \left\{ 
\begin{array}{cl} 
 1 &  \text{when }F_p(X)\ne \emptyset,\\
d_r/(d_r-1)& \text{when } F_p(X)= \emptyset.\\
\end{array} \right.
\end{align*}
\end{thm}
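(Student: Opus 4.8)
The plan is to dispose of the case $F_p(X)\ne\emptyset$ immediately and, assuming $F_p(X)=\emptyset$, to prove the bounds $\ep(X,\calo_X(1);p)\ge d_r/(d_r-1)$ and $\ep(X,\calo_X(1);p)\le d_r/(d_r-1)$ separately, the first via Theorem~\ref{intro thm2} and the second by exhibiting a Seshadri curve inside a cone over $F_p(Y)$. If $F_p(X)\ne\emptyset$, a line $\ell\subset X$ through $p$ satisfies $\ell.\calo_X(1)=1=\mult_p\ell$, so $\ep(X,\calo_X(1);p)\le 1$; as $\calo_X(1)$ is very ample one always has $\deg C\ge\mult_p C$, hence $\ep\ge 1$, and therefore $\ep=1$. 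Assume henceforth $F_p(X)=\emptyset$; then $d_r\ge 2$, since $d_r=1$ would force $d_p(Y)=1$ by iii), making $Y$ — hence also $X$ — a linear subspace near $p$, so that a line on $X$ passes through $p$.

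For the lower bound, hypotheses i) and iii) give $d_p(X)\le d_r$: $X$ is, near $p$, the component through $p$ of the global scheme-theoretic intersection $\tilde X:=Y\cap\bigcap_{j}(f_j=0)$, whose ideal sheaf equals $I_Y+(f_1,\dots,f_r)$ and is therefore generated at $p$ by global sections of degree $\le\max(d_p(Y),d_1,\dots,d_r)=d_r$, so $d_p(X)\le d_p(\tilde X)\le d_r$. Since $F_p(X)=\emptyset$ means no line on $X$ passes through $p$, Theorem~\ref{intro thm2} gives $\ep(X,\calo_X(1);p)\ge d_p(X)/(d_p(X)-1)\ge d_r/(d_r-1)$, the last inequality because $t\mapsto t/(t-1)$ is decreasing.

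For the upper bound I proceed as follows. Choose an irreducible subvariety $Z\subseteq F_p(Y)$ with $\dim Z=\dim F_p(Y)$, so that $\dim Z+1\ge\sum_{j=1}^r d_j\ge 2$ by ii), and let $V=\Cone Z\subseteq Y$ be the projective cone with vertex $p$: an irreducible cone of dimension $\dim Z+1$ with $\mult_p V=\deg V$. Fix homogeneous coordinates so that $p=[1:0:\dots:0]$ and $x_0\nmid f_j$ for all $j$ (take the hyperplane $(x_0=0)$ generic among those not through $p$), and on the affine chart $\{x_0=1\}$ write $f_j=\sum_{i=1}^{d_j}f_j^{(i)}$ with $f_j^{(i)}$ homogeneous of degree $i$ in $x_1,\dots,x_N$ (the constant term vanishes as $p\in X$). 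Now slice $V$ successively by the hypersurfaces $(f_j^{(i)}=0)$ — each a cone with vertex $p$ — for $1\le j\le r-1,\ 1\le i\le d_j$ and for $j=r,\ 1\le i\le d_r-2$, after each slice passing to an irreducible component (again a cone through $p$) and skipping a slice by a form that already vanishes; then slice by generic hyperplanes through $p$, each dropping the dimension by one, until it equals $2$. The inequality in ii) is what ensures the dimension stays $\ge 2$ through the first group of slices, and passing to irreducible components keeps the outcome an irreducible cone $W$ with vertex $p$; set $w:=\deg W=\mult_p W$. By construction $f_j\equiv 0$ on $W$ for $j<r$, while $f_r|_W=f_r^{(d_r-1)}|_W+f_r^{(d_r)}|_W$. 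Put $H_r:=(x_0f_r^{(d_r-1)}+f_r^{(d_r)}=0)$, a hypersurface of degree $d_r$ through $p$. If $f_r^{(d_r-1)}|_W=0$ or $W\subseteq H_r$, then $W\cap H_r$ — respectively $W$ itself — would be a positive-dimensional cone with vertex $p$ lying in $X$ near $p$ (all $f_j$ vanish on it and it lies in $V\subseteq Y$), hence would contain a line on $X$ through $p$; since $F_p(X)=\emptyset$, this forces $f_r^{(d_r-1)}|_W\ne 0$ and $W\not\subseteq H_r$.

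Finally I measure $C:=W\cap H_r$. Bézout on the surface $W$ gives $\deg C=d_r w$. For the multiplicity at $p$: the tangent cone of $W$ at $p$ is $W$ itself (an affine cone), whose coordinate ring is a domain, so the initial form $f_r^{(d_r-1)}|_W$ of the local equation of $H_r$ on $W$, of degree $d_r-1$, is a nonzerodivisor; hence $\operatorname{gr}_{\m_p}\calo_{C,p}$ is the corresponding quotient and $\mult_p C=\deg\!\bigl(W\cap(f_r^{(d_r-1)}=0)\bigr)=(d_r-1)w$. Since $C$ lies in $X$ in a neighborhood of $p$, writing the components of $C$ through $p$ as $C_1,\dots,C_t$ with cycle multiplicities $m_i\ge 1$ we get $\sum_i m_i\deg C_i\le d_r w$ and $\sum_i m_i\mult_p C_i=(d_r-1)w$, whence $\min_i\dfrac{\deg C_i}{\mult_p C_i}\le\dfrac{\sum_i m_i\deg C_i}{\sum_i m_i\mult_p C_i}\le\dfrac{d_r}{d_r-1}$. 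The minimizing $C_i$ is an irreducible curve through $p$, and being irreducible and contained in $X$ near $p$ it is contained in $X$; thus $\ep(X,\calo_X(1);p)\le\deg C_i/\mult_p C_i\le d_r/(d_r-1)$, which together with the lower bound proves the theorem. I expect the principal difficulty to lie in this construction: verifying that the iterated slicing genuinely yields an irreducible cone of dimension $2$ (which is where hypothesis ii) is used), and — the crux — that $F_p(X)=\emptyset$ forces $H_r$ to meet $W$ at $p$ with multiplicity exactly $d_r-1$ rather than $d_r$; this is where the numerical gap ``$d_r$ versus $d_r-1$'' arises, and it rests on $f_r^{(d_r-1)}$ restricting to a nonzerodivisor in the tangent-cone ring of $W$. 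A secondary point is that the complete intersection description of $X$ is only local near $p$, so one must separately check that the globally defined curve $C_i$ lies in $X$.
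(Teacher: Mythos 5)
Your argument is correct, and while it shares the paper's overall skeleton (the lower bound via $d_p(X)\le\max\{d_p(Y),d_r\}=d_r$ and Theorem \ref{intro thm2}; the upper bound via a curve cut out of a cone over $F_p(Y)$ by the graded pieces $f_j^i$ together with the hypersurface $x_0f_r^{d_r-1}+f_r^{d_r}=0$), the execution of the upper bound is genuinely different. The paper first notes that $F_p(X)=\emptyset$ together with ii) forces $\sum_j d_j=\dim F_p(Y)+1$, then invokes the lower semicontinuity of Seshadri constants to replace the given $f_j^i$ by \emph{general} ones, so that the intersection of $\Cone Z$ with the $\sum_j d_j-1$ cutting hypersurfaces is a single reduced irreducible curve whose degree and multiplicity are computed exactly by the product formulas (yielding the factors $d_1!\cdots d_{r-1}!(d_r-2)!$). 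You instead work throughout with the actual $f_j$: you slice the cone one hypersurface at a time, passing to irreducible components (each again a cone with vertex $p$) and inserting generic hyperplanes through $p$ to reach a two-dimensional irreducible cone $W$, use $F_p(X)=\emptyset$ to rule out the degenerations $f_r^{d_r-1}|_W=0$ and $W\subset H_r$, compute $\mult_p(W\cap H_r)=(d_r-1)w$ via the initial form being a nonzerodivisor in the (domain) tangent-cone ring, and finally extract an irreducible component with ratio $\le d_r/(d_r-1)$ by the mediant inequality applied to the cycle decomposition. What your route buys is self-containedness: you never need to set up a family of varieties and appeal to semicontinuity (whose applicability to this local-complete-intersection situation the paper does not spell out), and you do not need the equality $\sum_j d_j=\dim F_p(Y)+1$ or any genericity of the defining equations; the price is the extra commutative algebra (associativity formula for multiplicities and the behaviour of $\operatorname{gr}_{\m_p}$ under quotient by an element with regular initial form). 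Both arguments also correctly handle the point that the complete-intersection description of $X$ is only local at $p$, by noting that an irreducible curve through $p$ contained in $X$ near $p$ is contained in $X$.
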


\begin{proof}
When $F_p(X) \not = \emptyset$,
this theorem is clear.
Thus we may assume $F_p(X)= \emptyset$,
and show $\ep(X,\calo_X(1);p) = d_r/(d_r-1)$.
As in the proof of Theorem \ref{intro thm2},
we take homogeneous coordinates $x_0,\ldots,x_N$ on $\P^N$
such that $p=[1:0:\cdots:0]$,
and write $f_j=\sum_{i=1}^{d_j} x_0^{d_j-i} f_j^i$
for $f_j^i \in \C[x_1,\ldots,x_N]_i$.
By Lemma \ref{cone of F_p(X)},
we have
\[
F_p(X)=F_p(Y) \, \cap \ \bigcap_{i,j}  \ (f_j^i=0) \subset \Proj \C[x_1,\cdots,x_N] .
\]
Hence
$F_p(X)$ is an intersection of $F_p(Y)$ and $\sum d_j$ hypersurfaces.
Thus
$\sum_{j=1}^r d_j =\dim F_p(Y) +1$ must hold by the condition ii) and $F_p(X)= \emptyset$.

By Theorem \ref{intro thm2},
we have $\ep(X,\calo_X(1);p) \geq d_r/(d_r-1)$ since $d_p(X) \leq \max \{ d_p(Y),d_r\} =d_r$.
To show the opposite inequality,
we may assume that each $f_j^i$ is general
because of the lower semicontinuity of Seshadri constants (cf.\ \cite[Example 5.1.11]{La}).
For general $f_j^i$,
we can find a curve $C \subset X$ through $p$
such that $\deg (C) / \mult_p (C) = d_r/(d_r-1)$ as follows.

Fix an irreducible component $Z$ of $F_p(Y)$ such that $\dim Z=\dim F_p(Y)$.
We define a curve $C \subset \P^N$ to be
\begin{align*}
C= \Cone Z &\cap \bigcap_{1 \leq j \leq r-1}  (f_j^1=\cdots =f_j^{d_j-1}=f_j^{d_j} =0) \\
&\cap (f_r^1=\cdots=f_r^{d_r-2} =x_0 f_r^{d_r-1}+f_r^{d_r} =0).
\end{align*}
Note $d_r \geq 2$ holds because $X$ is not linear and $d_p(X) \leq d_r$.
Since
\[
\Cone Z \subset \Cone F_p(Y) \subset Y,
\]
$C$ is contained in $X$.
By definition, $C$ is cut out from $ \Cone Z$ by $\sum_{i=1}^{r-1} d_j + (d_r-1)=\sum_{j=1}^r d_j  -1$ hypersurfaces,
and $ \sum_{j=1}^r d_j =\dim F_p(Y) +1 =\dim \Cone Z$.
Since all $f_j^i$ are general,
$C$ is a reduced and irreducible curve.
By definition, we have
\begin{align*}
\deg (C) &= \deg (\Cone Z)  \cdot d_1 ! \cdots d_{r-1} ! \cdot (d_r-2)! \cdot d_r ,\\
\mult_p (C) &= \mult_p (\Cone Z) \cdot d_1 ! \cdots d_{r-1} ! \cdot (d_r-2)! \cdot (d_r -1).
\end{align*}
Thus it holds that $\deg (C) / \mult_p (C) =d_r / (d_r-1) $
because $ \deg (\Cone Z) = \deg (Z)  = \mult_p (\Cone Z)$.
Hence $\ep(X,\calo_X(1);p) \leq \deg (C) / \mult_p (C) = d_r/(d_r-1)$ holds
and this theorem follows.
\end{proof}

%%%%%%%%%%%%%%%%%%%%%%%%%%%%%%%%%%%%
\subsection{Finding conics}\label{subsection finding conic}
%%%%%%%%%%%%%%%%%%%%%%%%%%%%%%%%%%%%

When $Y \subset \P^N$ is a rational homogeneous space of Picard number $1$
other than a projective space,
$d_p(Y)=2$ for any $p \in Y$.
Hence we cannot apply Theorem \ref{main_thm} to the case
when $d_r=1$,
i.e.,
$X$ is a section of $Y$ by hyperplanes.

Since $d_p(X)=d_p(Y)=2$ holds for such $X$,
the lower bound obtained by Theorem \ref{intro thm2} is $d_p(X) / (d_p(X)-1)=2$.
Thus if there exists a (smooth) conic $C \subset X$ passing through $p$,
we have
\[
2 \leq \ep(X,\calo_X(1);p) \leq \frac{\deg (C)}{\mult_p(C)} =2 .
\]

\vspace{2mm}
For the following proposition,
we prepare some notations.
For a subvariety $X$ in a variety $Y$,
we denote by $I_{X/Y} $ the ideal sheaf on $Y$ corresponding to $X$.
A \textit{conic} in $\P^N$ is a smooth projective curve of degree $2$,
and a \textit{plane} in $\P^N$ is a $2$-dimensional linear projective subspace.
For a projective variety $Y \subset \P^N$,
we say that $Y$ is \textit{covered by} lines (resp.\ conics, planes)
if for general $p \in Y $,
there exists a line (resp.\ a conic, a plane) on $Y$ containing $p$.

\begin{prop}\label{existence_of_conic}
Let $Y \subset \P^N$ be a smooth projective variety satisfying the following:
\begin{itemize}
\item[i)] $I_{Y/\P^N} \otimes \calo_{\P^N}(2)$ is globally generated,
\item[ii)] for a general $p\in Y$,
$\dim R_p(Y) =2$ holds,
where $R_p(Y)$ is the subscheme of the Hilbert scheme $\Hilb(Y)$
which parametrizes conics on $Y$ passing through $p$.
\end{itemize}

Then for general $p \in Y$ and general hyperplane $H \subset \P^N$ containing $p$,
there exists a conic $C$ on $Y \cap H$ passing through $p$. 
\end{prop}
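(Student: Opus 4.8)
The plan is to count dimensions of the relevant parameter spaces and show that the family of conics on $Y$ through $p$ is large enough that a general hyperplane section still meets it. Fix a general $p \in Y$. By hypothesis ii), $R_p(Y)$, the scheme of conics on $Y$ through $p$, has dimension $2$. Each such conic $C$ spans a plane $\langle C \rangle \subset \P^N$, and $C$ lies on $Y \cap H$ precisely when the plane $\langle C\rangle$ is contained in $H$. So I would introduce the incidence correspondence
\[
\Sigma = \{ (C, H) \mid C \in R_p(Y),\ H \in |\calo_{\P^N}(1) \otimes \m_p|,\ \langle C \rangle \subset H \},
\]
together with its two projections to $R_p(Y)$ and to the $\P^{N-1}$ of hyperplanes through $p$. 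The goal is to show the second projection is dominant, which forces a conic on $Y \cap H$ through $p$ for general $H$ containing $p$.

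The key computation is the dimension count for $\Sigma$. The fibre of the first projection $\Sigma \to R_p(Y)$ over $C$ consists of hyperplanes $H$ containing both the plane $\langle C \rangle$ and the point $p$; since $p \in C \subset \langle C\rangle$ already, this is the linear system of hyperplanes containing a fixed plane, which has dimension $N-3$. Hence $\dim \Sigma = 2 + (N-3) = N-1$, matching the dimension of the target $\P^{N-1}$ of hyperplanes through $p$. So it remains to check that the second projection $\Sigma \to \P^{N-1}$ is \emph{generically finite} (or at least dominant), i.e. does not contract positive-dimensional families. The natural way is to exhibit one hyperplane $H_0 \ni p$ whose fibre is finite: equivalently, show that only finitely many conics $C \in R_p(Y)$ have $\langle C \rangle \subset H_0$. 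If instead every $H \ni p$ contained a positive-dimensional family of conic-planes, then letting $H$ vary one would produce too many planes spanned by conics on $Y$; more cleanly, one argues that the map $C \mapsto \langle C \rangle$ from $R_p(Y)$ to the Grassmannian of planes has fibres of dimension at most... well, a plane containing the conic $C$ through $p$ can contain at most a pencil of conics through $p$, but a general member of that pencil meets $Y$ in the plane in the same scheme, so genericity of $p$ bounds this. This is where hypothesis i) enters: global generation of $I_{Y/\P^N}\otimes \calo(2)$ ensures $Y$ is scheme-theoretically cut out by quadrics near a general point, so a plane $\Pi$ with $\Pi \cap Y$ containing a conic through a general $p$ cannot have $\Pi \subset Y$ (else $Y$ would be covered by planes, contradicting $\dim R_p(Y)=2$ by a separate count, or contradicting the structure of such $Y$) and the quadrics cutting out $Y$ restrict to $\Pi$ to cut out exactly that conic.

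I expect \textbf{the main obstacle} to be exactly this last point: verifying that the second projection $\Sigma \to \P^{N-1}$ is dominant rather than having image of dimension $< N-1$. The dimension count $\dim\Sigma = N-1$ is necessary but not sufficient; one must rule out the degenerate situation where all the conics through $p$ span planes sitting inside a fixed proper sub-locus of hyperplanes. The cleanest route is probably to fix a general conic $C_0 \in R_p(Y)$ through $p$, choose a general hyperplane $H_0$ containing the plane $\langle C_0\rangle$, and show that $R_p(Y \cap H_0)$ is nonempty of the expected dimension $0$ by a tangent-space (deformation-theoretic) argument at $[C_0]$: the obstruction to deforming $C_0$ inside $Y\cap H_0$ lives in $H^1$ of the normal bundle, and normal-bundle positivity for conics on the smooth $Y$ (inherited from the homogeneous-space situation the proposition is applied to, though stated abstractly here via ii)) together with the "one linear condition" imposed by cutting with $H_0$ should make this vanish. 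Once a single conic on some $Y \cap H_0$ through $p$ is produced with $H_0$ general among hyperplanes through a general $p$, semicontinuity / openness finishes the claim for general $(p, H)$.
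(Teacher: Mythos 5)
Your setup coincides with the paper's: the same incidence variety $I=\{(C,H)\}$, the same fibre dimension $N-3$ for the first projection, the same count $\dim I = N-1$, and the correct identification that everything hinges on the second projection $\pi_2$ being generically finite. But at that crucial point your argument has a genuine gap. You write that normal-bundle positivity ``together with the one linear condition imposed by cutting with $H_0$ should make this vanish''; this is precisely the statement that needs proof, and it is not automatic. Since $C$ is a free conic in a $2$-dimensional family, $f^*N_{C/Y}$ is either $\calo_{\P^1}(2)\oplus\calo^{\oplus n-1}$ or $\calo_{\P^1}(1)^{\oplus 2}\oplus\calo^{\oplus n-2}$, and cutting with $H$ gives an exact sequence $0\to N_{C/X}\to N_{C/Y}\to N_{X/Y}|_C\to 0$ with $f^*N_{X/Y}|_C\cong\calo_{\P^1}(2)$. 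One gets $N_{C/X}\cong\calo_C^{\oplus n-1}$ (hence $h^0(N_{C/X}\otimes\m_p)=0$ and a finite fibre) only if the map $N_{C/Y}\to N_{X/Y}|_C$ induced by a general hyperplane through $C$ is surjective on the positive summand. A priori all hyperplanes containing $C$ could induce maps killing that summand, in which case $N_{C/X}$ would retain a positive piece and $\pi_2^{-1}(H)$ could be positive-dimensional even though $\dim I = N-1$. The paper rules this out with a specific lemma that you do not supply: hypothesis i) implies that a general quadric through $Y$ cuts the plane $P_C=\langle C\rangle$ exactly in $C$, so $I_{C/\P^N}=I_{Y/\P^N}+I_{P_C/\P^N}$ and hence $I_{C/Y}\otimes\calo_Y(1)$ is globally generated by the hyperplanes containing $P_C$. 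This global generation is exactly what makes the evaluation map $\delta: H^0(I_{C/\P^N}\otimes\calo(1))\otimes\calo_C\to N_{C/Y}^{\vee}\otimes\calo_Y(1)|_C$ surjective, so that a general $H$ hits the positive summand. Your sketch invokes condition i) only in a muddled side remark about fibres of $C\mapsto\langle C\rangle$, which is not where it is needed.

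A second, smaller omission: a general $(C,H)\in I$ has $H$ general only among hyperplanes containing the plane of $C$, not general in $|\calo_{\P^N}(1)\otimes\m_p|$, so one must separately verify that $Y\cap H$ is smooth along $C$ before the normal-bundle exact sequence and the tangent-space bound $\dim\pi_2^{-1}(H)\le h^0(N_{C/X}\otimes\m_p)$ can be used. The paper does this via another dimension count (again powered by the same global-generation claim), including a separate treatment of the case $\dim Y=2$. So while your overall strategy is the right one, the proof is incomplete without the global-generation lemma and the smoothness check.
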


\begin{proof}
Fix a general point $p \in Y$.
Since $\dim R_p(Y)=2$,
we can choose an irreducible component $\mathcal R$ of $R_p(Y)$ of dimension $2$.
We define the incidence variety $I$ as
\[
 I= \{ (C,H) \in \mathcal R \times |\calo_{\P^N}(1) \otimes \m_p| \, | \, C \subset H \},
\]
and consider the natural projections
\[
\xymatrix{
&I \ar[dl]_{\pi_1}  \ar[dr]^{\pi_2} &\\
\mathcal R & &  |\calo_{\P^N}(1) \otimes \m_p| .  \\
}\]

To show this proposition,
we have to show that the projection $\pi_2$ is generically surjective.
For a fixed $C \in \mathcal R$,
a hyperplane $H \in |\calo_{\P^N}(1) \otimes \m_p|$ contains $C$
if and only if $H$ contains the plane spanned by $C$.
Thus $\pi_1$ is a $\P^{N-3}$-bundle
and it holds that
\[
\dim I=\dim \mathcal R + N-3 =N-1=\dim  |\calo_{\P^N}(1) \otimes \m_p|.
\]
Hence it suffices to show that
$\dim \pi_2^{-1} (H)=0$ for a general $H \in \pi_2(I)$.
%%%%%%%%%%%%%%%%%%%%
%Step 1
%%%%%%%%%%%%%%%%%%%%
\vspace{2mm}
\ \\
\textbf{Step 1.} 
In this step,
we show the following claim.

\begin{clm}\label{glob_gen_of_ideal}
In the above setting,
the composition of the natural maps
\begin{align*}
H^0(\P^N,I_{C/\P^N} \otimes \calo_{\P^N}(1)) \otimes \calo_{\P^N} &\arw
H^0(Y, I_{C/Y} \otimes \calo_Y(1)) \otimes \calo_Y \\
&\arw  I_{C/Y} \otimes \calo_Y(1)
\end{align*}
is surjective for $C \in \mathcal R$.
\end{clm}

\begin{proof}[Proof of Claim \ref{glob_gen_of_ideal}]
Fix $C \in \mathcal R$, and let $P_C \subset \P^N$ be the plane spanned by $C$.
Since $\dim R_p(Y)=2$ and $\dim R_p(P_C) =4$,
$P_C$ is not contained in $Y$.
Choose a general section $f \in H^0(\P^N,I_{Y/\P^N} \otimes \calo_{\P^N}(2))$,
and let $D \subset \P^N$ be the corresponding hypersurface of degree $2$.
By the condition i), the generality of $f$, and $P_C \not \subset Y $,
we have $P_C \not \subset D$ .
Thus as an effective divisor on $P_C$,
$C $ is contained in $ D | _{P_C}$.
Since $C$ is a conic and $D | _{P_C} $ is an effective divisor of degree $2$ on $P_C \cong \P^2$,
$C$ and $D | _{P_C} $ coincide as schemes.
This means $ I_{C/\P^N} = I_{D/ \P^N} + I_{P_C / \P^N}$.
Thus it holds that
\begin{align*}
I_{C / Y } =  I_{C/\P^N} /  I_{Y/\P^N} &= ( I_{D/ \P^N} + I_{P_C / \P^N} ) /  I_{Y/\P^N} \\
&= ( I_{Y/ \P^N} + I_{P_C / \P^N} ) /  I_{Y/\P^N} .
\end{align*}
The last equality follows from $I_{D/ \P^N} \subset I_{Y/ \P^N} $.
Since
\[
H^0(\P^N,I_{P_C / \P^N}  \otimes \calo_{\P^N}(1)) \otimes \calo_{\P^N} \arw I_{P_C / \P^N}  \otimes \calo_{\P^N}(1)
\]
is surjective
and $H^0(\P^N,I_{P_C / \P^N}  \otimes \calo_{\P^N}(1))=H^0(\P^N,I_{C/\P^N} \otimes \calo_{\P^N}(1))$,
this claim follows.
\end{proof}
%%%%%%%%%%%%%%%%%%%%
%Step 2
%%%%%%%%%%%%%%%%%%%%
\noindent
\textbf{Step 2.} 
Let $(C,H) \in I$ be a general element,
and set $X=Y \cap H$.
In this step, we show that $X$ is smooth.
(Note that we do not know $H$ is general in $|\calo_{\P^N}(1) \otimes \m_p|$ a priori.
Thus we have to check the smoothness of $X$.) 
It is clear that $X \setminus C$ is smooth by Claim \ref{glob_gen_of_ideal} and the generality of $(C,H)$.
Hence it is enough to show that $X$ is smooth along $C$.

Consider
\[
B := \{ (q,H ) \in C \times |\calo_{\P^N}(1) \otimes I_{C/\P^N} | \, | \, Y \cap H \text{ is singular at } q \}.
\]
For each $q \in C$,
the fiber of the projection $B \arw C$ over $q$ corresponds to the kernel of
\begin{align*}
H^0(\P^N,I_{C/\P^N} \otimes \calo_{\P^N}(1))  &\arw H^0(Y,I_{C/Y} \otimes \calo_Y(1)) \\
&\arw (I_{C/Y} + \m_{Y,q}^2) / \m_{Y,q}^2,
\end{align*}
where $\m_{Y,q}$ is the maximal ideal sheaf on $Y$ at $q$.
By Claim \ref{glob_gen_of_ideal},
this map is surjective.
Hence the projection $B \arw C$ is a $\P^k$-bundle
for $k=\dim |\calo_{\P^N}(1) \otimes I_{C/\P^N} | - \codim (C,Y)$.
Thus we have
\[
\dim B=k+1=\dim |\calo_{\P^N}(1) \otimes I_{C/\P^N} | +2 -\dim Y.
\]
If $\dim Y \geq 3$,
the natural projection $B \arw |\calo_{\P^N}(1) \otimes I_{C/\P^N} | $ is not generically surjective
since $\dim B < \dim |\calo_{\P^N}(1) \otimes I_{C/\P^N} |$.
This means $X=Y \cap H$ is smooth along $C$ for general $H \in |\calo_{\P^N}(1) \otimes I_{C/\P^N} |$.
When $\dim Y=2$,
it holds $K_Y.C=-4$ because $\dim \mathcal R=2  $ and $C$ is a free rational curve.
Thus we have $C^2=2$.
By the Hodge index theorem,
$Y \subset \P^N$ is a quadric surface and $C \sim \calo_Y(1)$.
Hence $X$ must coincide with $C$,
which is smooth.
%%%%%%%%%%%%%%%%%%%%%%%%%%%%%%%%%%
%Since $K_Y.C=-4$,
%we have $0 = p_g(C)=1+(C^2 + K_Y.C)/2$.
%Thus $C^2=2$ holds.
%By Hodge index theorem,
%$4=(C.\calo_Y(1))^2 \geq C^2 \cdot \calo_Y(1)^2 = 2 \deg(Y)$ holds.
%%%%%%%%%%%%%%%%%%%%%%%%%%%%%%%%%%%
%%%%%%%%%%%%%%%%%%%%
%Step 3
%%%%%%%%%%%%%%%%%%%%
\vspace{2mm}
\ \\
\textbf{Step 3.}
Let $(C,H) \in I$ and $X=Y \cap H$ be as in Step 2.
To prove $\dim \pi_2^{-1} (H) =0$,
it is enough to show $N_{C/X} \cong \calo_{C}^{\oplus n-1}$ for $n=\dim X$
because $\dim \pi_2^{-1} (H) \leq h^0(C, N_{C/X} \otimes \m_p)$.
Write
\[
f^* N_{C/Y}=\bigoplus_{i=1}^{n} \calo_{\P^1}(a_i)
\]
for integers $a_1 \geq \ldots \geq a_{n} $,
where $f$ is an isomorphism $\P^1 \arw C$.
(We use $f$ not to confuse $\calo_{\P^N}(1) |_C$ and the degree $1$ invertible sheaf on $C$.) 
Since $C$ is free on $Y$,
it follows that $a_i \geq 0$ for any $i$.
Furthermore, $\sum_i a_i=\dim \mathcal R=2$ holds since $C$ is free.
Hence we have
\[
f^* N_{C/Y} = \calo_{\P^1}(2) \oplus \calo^{\oplus n-1} \text{ or } \ \calo_{\P^1}(1)^{\oplus 2} \oplus \calo^{\oplus n-2} .
\]

\vspace{1mm}
Let $\alpha \in H^0(\P^N,I_{C/\P^N} \otimes \calo_{\P^N}(1)) $
be a section corresponding to $H$.
From the natural surjection
\[
I_{C/Y} \otimes \calo_Y(1) \arw I_{C/Y} / I_{C/Y}^2 \otimes \calo_Y(1) |_C \cong N_{C/Y}^{\vee} \otimes \calo_Y(1) |_C
\]
and Claim \ref{glob_gen_of_ideal},
we obtain a surjective map
\[
\delta: H^0(\P^N,I_{C/\P^N} \otimes \calo_{\P^N}(1)) \otimes \calo_C \arw N_{C/Y}^{\vee} \otimes \calo_Y(1) |_C.
\]
Furthermore, there exist natural isomorphisms
\[
N_{C/Y}^{\vee} \otimes \calo_Y(1) |_C  \cong  \mathcal{H}om_C (N_{C/Y}, \calo_Y(1) |_C)
\cong  \mathcal{H}om_C (N_{C/Y}, N_{X/Y} |_C).
\]
The image $\delta(\alpha) \in H^0(C, N_{C/Y}^{\vee} \otimes \calo_Y(1) |_C) \cong \Hom_C (N_{C/Y}, N_{X/Y} |_C)$
of $\alpha$ induces an exact sequence
\[
0 \arw N_{C/X} \arw N_{C/Y} \stackrel{\delta(\alpha)}{\arw} N_{X/Y} |_C \arw 0 .
\]
Since
$f^* N_{C/Y} =  \calo_{\P^1}(2) \oplus \calo^{\oplus n-1} $ or $\calo_{\P^1}(1)^{\oplus 2} \oplus \calo^{\oplus n-2} $
and $f^* N_{X/Y} |_C =\calo_{\P^1}(2)$,
we have $N_{C/X} \cong \calo_C^{\oplus n-1} $
if the restriction of $\delta(\alpha) $ on $ \calo(2) $ or $\calo(1)^{\oplus 2} $ is surjective.
Since $\alpha$ is general,
this follows from the subjectivity of $\delta$.
\end{proof}

\subsection{Proof of Theorem \ref{intro_thm1}}\label{subsection_proof_of_cor}

As a corollary of Theorem \ref{main_thm} and Proposition \ref{existence_of_conic},
we obtain Theorem \ref{intro_thm1}.
As stated in Remark \ref{rem for thm 1},
Theorem \ref{intro_thm1} can be rephrased as follows.

\begin{cor}[=Theorem \ref{intro_thm1}]\label{rational_homog}
Let $Y \subset \P^N$ be a rational homogeneous space of Picard number $1$,
which is embedded by the ample generator.
Let $X$ be a complete intersection variety in $Y$ of hypersurfaces of degrees $d_1 \leq \ldots \leq d_r$
such that $-K_X$ is ample.
For $p \in X$,
it holds that
\begin{align*}
\ep (X,\calo_X(1); p) = \left\{ 
\begin{array}{cl} 
 1 &  \text{when }F_p(X)\ne \emptyset,\\
d_r/(d_r-1) & \text{when } F_p(X)= \emptyset \text{ and } d_r \geq 2 ,\\ 
2 & \text{when } F_p(X)= \emptyset \text{ and } d_r =1.\\
\end{array} \right.
\end{align*}
\end{cor}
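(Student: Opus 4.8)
The plan is to deduce Corollary~\ref{rational_homog} from the two main results of this section, Theorem~\ref{main_thm} and Proposition~\ref{existence_of_conic}, after checking that the hypotheses of those results are met by complete intersections in rational homogeneous spaces $Y$ of Picard number one. The case $F_p(X) \ne \emptyset$ is immediate, so I split into the two remaining cases according to whether $d_r \geq 2$ or $d_r = 1$.

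First I would treat the case $d_r \geq 2$ by verifying the three conditions of Theorem~\ref{main_thm} for the pair $X \subset Y$. Condition~i) is automatic since $X$ is a complete intersection in $Y$. For condition~iii), one uses that $Y$ is cut out by quadrics when $Y$ is not a projective space (so $d_p(Y) = 2 \leq d_r$), while if $Y = \P^N$ then $d_p(Y) = 1 \leq d_r$; either way $d_p(Y) \leq d_r$. The substantive point is condition~ii): I need $F_p(Y) \ne \emptyset$ and $\dim F_p(Y) + 1 \geq \sum_{j=1}^r d_j$. Here the key input is that a rational homogeneous space of Picard number one is covered by lines through every point, and moreover $\dim F_p(Y)$ can be computed from the adjunction/index data: writing $-K_Y = \calo_Y(i_Y)$ for the Fano index $i_Y$ of $Y$, a general line $\ell$ through $p$ satisfies $-K_Y.\ell = i_Y$ and $F_p(Y)$ has dimension $i_Y - 2$. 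On the other hand, the condition $-K_X$ ample together with adjunction forces $-K_X = \calo_X(i_Y - \sum_j d_j)$ with $i_Y - \sum_j d_j \geq 1$, i.e. $\sum_j d_j \leq i_Y - 1 = \dim F_p(Y) + 1$. This is exactly condition~ii), and Theorem~\ref{main_thm} then yields $\ep(X,\calo_X(1);p) = d_r/(d_r-1)$ when $F_p(X) = \emptyset$.

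Next I would treat the case $d_r = 1$, so $X = Y \cap H_1 \cap \cdots \cap H_r$ is a linear section of $Y$. Writing $Y' = Y \cap H_1 \cap \cdots \cap H_{r-1}$, one reduces to showing: for general $p$ in a variety $Y'$ of the relevant type and general hyperplane $H$ through $p$, there is a conic on $Y' \cap H$ through $p$, which is the content of Proposition~\ref{existence_of_conic}. To apply it I must check condition~i), that $I_{Y'/\P^N} \otimes \calo_{\P^N}(2)$ is globally generated — this holds because $Y$ is cut out by quadrics and hyperplanes are linear — and condition~ii), that $\dim R_p(Y') = 2$ for general $p$. The dimension count for conics is again via adjunction: a general free conic $C$ through $p$ on a Fano variety $W$ contributes $\dim R_p(W) = -K_W.C - 2 = 2(i_W) - 2$ where $i_W$ is the index, wait—more carefully, $\dim R_p(W) = (-K_W).C + (\dim W - 3) - (\dim W - 1) = -K_W.C - 2$; applied with $-K_{Y'} = \calo_{Y'}(i_Y - (r-1))$ and $C.\calo_{Y'}(1) = 2$ gives $\dim R_p(Y') = 2(i_Y - r + 1) - 2$, and the ampleness constraint $-K_X = \calo_X(i_Y - r) $ ample with $X$ linear means $i_Y - r = 1$, so $i_Y - (r-1) = 2$ and $\dim R_p(Y') = 2$, as needed. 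One must also confirm $R_p(Y') \ne \emptyset$, i.e. that $Y'$ is covered by conics, which follows from the fact that rational homogeneous spaces of Picard number one are covered by conics and this survives a general hyperplane section when the relevant index is positive. Then Proposition~\ref{existence_of_conic} produces a conic $C \subset X$ through general $p$, giving $\ep(X,\calo_X(1);p) \leq \deg(C)/\mult_p(C) = 2$; combined with the lower bound $\ep \geq d_p(X)/(d_p(X)-1) = 2$ from Theorem~\ref{intro thm2} (note $d_p(X) = d_p(Y) = 2$ since $Y$ is cut out by quadrics and not a projective space in this case — if $Y = \P^N$ then $d_r = 1$ is impossible as $-K_X$ would not be ample), we get equality. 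Finally, to pass from "general $p$" to "every $p \in X$ with $F_p(X) = \emptyset$", I invoke the transitivity of $\Aut(Y)$ on $Y$: since the statement is about a fixed $X$, one instead runs the argument for the family of all linear sections and uses homogeneity of $Y$ to move any point to a general one, so that the value of the Seshadri constant is independent of $p$ among points with $F_p(X) = \emptyset$.

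The main obstacle I anticipate is the bookkeeping in the $d_r = 1$ case: ensuring that the hypotheses of Proposition~\ref{existence_of_conic} genuinely hold for iterated hyperplane sections $Y'$ of rational homogeneous spaces — in particular that $Y'$ is still covered by conics with $\dim R_p(Y') = 2$ — and that the "general point" conclusion can be upgraded to "every point" via the homogeneity of $Y$ rather than of $X$ (which need not be homogeneous). The index computations via adjunction are routine once set up, but one should be careful that the various indices ($i_Y$, the coindex, the degree of $\calo_Y(1)$ relative to $-K_Y$) are handled consistently, and that the edge cases $Y = \P^N$ and $\dim Y' \leq 2$ (handled separately inside the proof of Proposition~\ref{existence_of_conic} via the Hodge index theorem) do not cause trouble.
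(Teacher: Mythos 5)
Your treatment of the case $d_r \geq 2$ matches the paper's: one verifies conditions i)--iii) of Theorem~\ref{main_thm} using $d_p(Y)\leq 2$, $\dim F_p(Y)=i(Y)-2$, and the adjunction inequality $\sum_j d_j \leq i(Y)-1$. The reduction to general $X$ and general $p$ in the case $d_r=1$ is also acceptable (the paper does it by lower semicontinuity of Seshadri constants rather than by homogeneity of $Y$, but either works), as is the lower bound $\ep\geq 2$ via $d_p(X)\leq d_p(Y)=2$.

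The genuine gap is in your verification of hypothesis ii) of Proposition~\ref{existence_of_conic} for $Y'=Y\cap H_1\cap\cdots\cap H_{r-1}$: you assert $R_p(Y')\neq\emptyset$ because ``rational homogeneous spaces of Picard number one are covered by conics and this survives a general hyperplane section.'' That survival claim is precisely the nontrivial point and is left unjustified: a priori the conics through a general point of $Y$ need not lie in the hyperplanes $H_j$, and you cannot iterate Proposition~\ref{existence_of_conic} itself to descend from $Y$ to $Y'$, since its hypothesis ii) requires $\dim R_p=2$ exactly, whereas $\dim R_p(Y)=-K_Y.C-2=2i(Y)-2=2r>2$ once $r>1$. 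The paper instead produces the conic directly on $Y'$: since $\dim F_p(Y)=i(Y)-2=r-1$, the scheme $F_p(Y')$ is the intersection of $F_p(Y)$ with $r-1$ general hyperplanes in $F_p(\P^N)$, hence nonempty and $0$-dimensional; by \cite[Proposition 5]{Hw}, $F_p(Y)$ is not an irreducible linear subspace of $\P^{N-1}$, which forces $\# F_p(Y')\geq 2$; and the union of two distinct free lines through $p$ smooths to a free conic on $Y'$ through $p$ by \cite[Proposition 4.24]{Deb}. Note that a single line in $F_p(Y')$ would not suffice --- you need two to smooth --- so Hwang's non-linearity result is an essential input that your argument is missing. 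Only after this does the computation $\dim R_p(Y')=-K_{Y'}.C-2=2$ (which presupposes a free conic $C$) make sense, and Proposition~\ref{existence_of_conic} become applicable.
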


\begin{proof}
By the adjunction formula,
$-K_X$ is ample if and only if $\sum_{j=1}^r d_j \leq i(Y)-1 $,
where $i(Y)$ is the positive integer satisfying $-K_Y=\calo_Y( i(Y) )$,
i.e., the Fano index of $Y$.
It is well known that $\dim F_p(Y) = i(Y) -2$.
For instance, the existence of lines is proved in \cite[Theorem V.1.1.15]{Ko},
and $\dim F_p(Y) $ is computed by the deformation theory.
By Lemma \ref{cone of F_p(X)},
it is easy to show that $X$ is covered by lines
if $\sum_{j=1}^r d_j < i(Y)-1 $
as in the first paragraph of the proof of Theorem \ref{main_thm}.

Furthermore,
$Y \subset \P^N$ is cut out by quadrics,
i.e.,
$d_p(Y) \leq 2$ holds for any $p \in Y$ (see \cite{Li} for example).
Thus we can apply Theorem \ref{main_thm} if $d_r \geq 2$,
and this corollary follows in that case.

\vspace{1.5mm}
Assume $F_p(X) = \emptyset$ and $d_r=1$.
In this case,
$i(Y)=\sum_{j=1}^r d_j +1=r+1$ and $Y$ is not a projective space.
Since $d_p(X) \leq d_p(Y)=2$,
$\ep (X,\calo_X(1); p)  \geq 2$ follows from Theorem \ref{intro thm2}.

To show the opposite inequality,
we use Proposition \ref{existence_of_conic}.
By the lower semicontinuity of Seshadri constants,
we may assume $X=Y \cap \bigcap_{j=1}^r H_j$ for general hyperplanes $H_j \subset \P^N$
and $p \in X$ is a general point.
Set
\[
Y'=Y \cap \bigcap_{j=1}^{r-1} H_j ,
\]
and let us check that $Y'$ satisfies the conditions i) and ii)
in Proposition \ref{existence_of_conic}.

Since $Y$ is cut out by quadrics,
so is $Y'$.
Hence i) holds for $Y'$.
By Lemma \ref{cone of F_p(X)},
$F_p(Y')$ is an intersection of $F_p(Y)$ and general $r-1$ hyperplanes in $F_p(\P^N)$.
Thus
$F_p(Y')$ is a non-empty $0$-dimensional set because $\dim F_p(Y)=i(Y)-2=r-1$.
Furthermore $F_p(Y) $ is not an irreducible linear space in $F_p(\P^N) \cong \P^{N-1}$ by \cite[Proposition 5]{Hw}.
Thus we have $\# F_p(Y') \geq 2$.
Since any line in $F_p(Y')$ is free by the generality of $p$,
the union $l \cup l'$ of two lines $l \not =l' \in F_p(Y')$
is smoothable into a free conic $C$ on $Y'$ (cf.\ \cite[Proposition 4.24]{Deb}).
Thus $Y'$ is covered by conics.
Since $-K_{Y'}=\calo_{Y'}(2)$,
we have
\[
\dim R_p(Y') = -K_{Y'}.C-2=2
\]
for $C \in R_p(Y')$,
which is nothing but ii).
Thus we can apply Proposition \ref{existence_of_conic}
to $Y'$,
and we have a conic on $X=Y' \cap H_r$ passing through $p$.
This means $\ep (X,\calo_X(1); p)  \leq 2 $ and the proof is finished.
\end{proof}

\end{document}